\newtheorem{theorem}{Theorem}[subsection]
\newtheorem{definition}[theorem]{Definition}
\newtheorem{example}[theorem]{Example}
\newtheorem{lemma}[theorem]{Lemma}
\newtheorem{proposition}[theorem]{Proposition}
\newtheorem{notation}[theorem]{Notation}
\begin{document}

\title{Projective limits of local shift morphisms}
\author{
Patrick Cabau 
}
\maketitle

\begin{abstract}
We define the notion of projective limit of local shift morphisms of type
$\left(  r,s\right)  $ and endow the space of such mathematical objects with
an adapted differential structure. The notion of shift Poisson tensor $P$ on a
Hilbert tower corresponds to such morphisms which are antisymmetric and whose
Schouten bracket $\left[  P,P\right]  $ vanishes. We illustrate this notion
with the example of the famous KdV equation on the circle $\mathbb{S}^{1}$ for
which one can associate a couple of compatible Poisson tensors of this type on
the Hilbert tower $\left(  H^{n}(\mathbb{S}^{1})\right)  _{n\in\mathbb{N}%
^{\ast}}$.

\end{abstract}

\textbf{MSC2010:} 46A13, 46E20, 46G05, 35R15.

\textbf{Keywords:} Local shift morphism; Projective limit; Shift Hilbert
Poisson tensor; Inductive dual; Hilbert tower; KdV equation.

\smallskip
\thanks{
\small{The author is grateful to Professor Fernand Pelletier for helpful comments.}
} 

\section{Introduction \label{_Introduction}}

This article is dealing with projective limits of shift Poisson tensors on
Hilbert towers whose set can be endowed with a Fr\'{e}chet structure.\newline

The paper is organized as follows. Section
\ref{_ProjectiveLimitsOfBanachSpacesAndDifferentiability} introduces the basic
notions and results on projective limits of Banach spaces and an adapted
notion of differentiability on such spaces. In section \ref{_ShiftOperators},
we introduce shift operators on direct limits of Banach spaces and endow the
set of projective limits of such operators with a Fr\'{e}chet structure
(Theorem \ref{T_FrechetStructureOnProjectiveLimitOfLnrs}). In section
\ref{_ProjectiveSequenceOfLocalShiftMorphisms}, we introduce the notion of
local shift morphism and study the smoothness of projective limits of such
operators (Theorem
\ref{T_SmoothStructureOnProjectiveLimitOfLocalShiftMorphims}). In section
\ref{_HilbertTowers}, we consider the particular case of Hilbert towers that
appears as an adapted framework to describe some PDEs. Section
\ref{_ShiftHilbertPoissonTensors} is devoted to the notion of shift Hilbert
Poisson tensors $P$, corresponding to a projective limit of antisymmetric
local shift morphisms defined on a Hilbert tower where the Schouten bracket
$[P,P]$ vanishes. As a fundamental example, we consider the KdV equation on
the circle $\mathbb{S}^{1}$ (cf. \cite{KapMak}) for which there exists a
couple of compatible shift Hilbert Poisson tensors on the projective limit of the Sobolev spaces $H^{n}\left(  \mathbb{S}^{1}\right)  $.

\section{Projective limits of Banach spaces and differentiability\label{_ProjectiveLimitsOfBanachSpacesAndDifferentiability}}

In a lot of situations in global analysis and Mathematical Physics, the
framework of Banach or Hilbert spaces is not adapted any more. In some cases,
the projective limits of such spaces must be adopted. For such Fr\'{e}chet
spaces, the differentiation method proposed by J.A. Leslie fits well to the
requirements of this geometrical situation.\newline We can remark that the
convenient setting, defined by A. Fr\"{o}licher and A. Kriegl (see
\cite{FroKri} and \cite{KriMic}), could have been used. This framework is
adapted to various structures (e.g. for convenient partial Poisson structures
as defined in \cite{Pel}).

\subsection{Projective limits of topological spaces\label{__ProjectiveLimitsTopologicalSpaces}}

\begin{definition}
$\left\{  \left(  X_{i},\delta_{i}^{j}\right)  \right\}  _{\left(  i,j\right)
\in\mathbb{N}^{2},\ j\geq i}$ is a \textit{projective sequence of topological
spaces} if

\begin{description}
\item[\textbf{(PSTS 1)}] for all $i\in\mathbb{N},$ $X_{i}$ is a topological space;

\item[\textbf{(PSTS 2)}] for all $i,$ $j$ $\in\mathbb{N},$ such that $j\geq
i,$ $\delta_{i}^{j}:X_{j}\rightarrow X_{i}$ is a continuous mapping;

\item[\textbf{(PSTS 3)}] for all $i\in\mathbb{N},$ $\delta_{i}^{i}={Id}%
_{X_{i}}$;

\item[\textbf{(PSTS 4)}] for all integers $i\leq j\leq k$, $\delta_{i}%
^{j}\circ\delta_{j}^{k}=\delta_{i}^{k}$.
\end{description}
\end{definition}

\begin{definition}
An element $\left(  x_{i}\right)  _{i\in\mathbb{N}}$ of the product
$\prod\limits_{i\in N}X_{i}$ is called a \textit{thread} if for all $j\geq i$,
$\delta_{i}^{j}\left(  x_{j}\right)  =x_{i}$.\newline The set
$X=\underleftarrow{\lim}X_{i}$ of such elements, endowed with the finest
topology for which all the projections $\delta_{i}:X\rightarrow X_{i}$ are
continuous, is called \textit{projective limit of the sequence} $\left\{
\left(  X_{i},\delta_{i}^{j}\right)  \right\}  _{\left(  i,j\right)
\in\mathbb{N}^{2},\ j\geq i}$.
\end{definition}

A basis of the topology of $X$ is constituted by the subsets $\left(
\delta_{i}\right)  ^{-1}\left(  U_{i}\right)  $ where $U_{i}$ is an open
subset of $X_{i}$ (and so $\delta_{i}^{j}$ is open).

\begin{definition}
Let $\left\{  \left(  X_{i},\delta_{i}^{j}\right)  \right\}  _{\left(
i,j\right)  \in\mathbb{N}^{2},\ j\geq i}$ and $\left\{  \left(  Y_{i}%
,\gamma_{i}^{j}\right)  \right\}  _{\left(  i,j\right)  \in\mathbb{N}%
^{2},\ j\geq i}$be two projective systems whose respective projective limits
are $X$ and $Y$.\newline A sequence $\left(  f_{i}\right)  _{i\in\mathbb{N}}$
of continuous mappings $f_{i}:X_{i}\rightarrow Y_{i}$, satisfying for all
$i,j\in\mathbb{N},$ $j\geq i,$ the condition%
\[
\gamma_{i}^{j}\circ f_{j}=f_{i}\circ\delta_{i}^{j}%
\]
is called a \textit{projective system of mappings}.
\end{definition}

The projective limit of this sequence is the mapping%
\[%
\begin{array}
[c]{cccc}%
f: & X & \rightarrow & Y\\
& \left(  x_{i}\right)  _{i\in\mathbb{N}} & \mapsto & \left(  f_{i}\left(
x_{i}\right)  \right)  _{i\in\mathbb{N}}%
\end{array}
\]

The mapping $f$ is continuous and is a homeomorphism if all the $f_{i}$ are
homeomorphisms (cf. \cite{AbbMan}).

\subsection{Differentiability \label{__Differentiability}}

We first introduce the notion of differentiability \textit{\`{a} la Leslie}
between Hausdorff locally convex vector spaces $E$ and $F$ which corresponds
to a particular case of the G\^{a}teaux derivative. For full details, the
reader is referred to \cite{Les} and \cite{DoGaVa}. Unlike the classical
framework of Banach spaces, the derivative does not involve the space of
continuous linear maps $\mathcal{L}\left(  E,F\right)  $ which has no
reasonable structure.

\begin{definition}
\label{D_DifferentiableMapBetweenHLCVS} Let $E$ and $F$ be two Hausdorff
locally convex vector spaces and let $U$ be an open subset of $E$. A
continuous map $f:U\longrightarrow F$ is said to be differentiable at $x\in U$
if there exits a continuous linear map $Df_{x}:E\longrightarrow F$ such that%
\[
R\left(  t,v\right)  =\left\vert
\begin{array}
[c]{cc}%
\dfrac{f\left(  x+tv\right)  -f\left(  x\right)  -Df_{x}\left(  tv\right)
}{t}, & t\neq0\\
0, & t=0
\end{array}
\right.
\]
is continuous at every $\left(  0,v\right)  \in\mathbb{R}\times F$. The map
$Df_{x}$ is called the derivative (or differential) of $f$ at $x$.

The map is said to be differentiable if it is differentiable at every $x\in U$.
\end{definition}

Note that, in this case, $Df_{x}$ is uniquely determined.

\begin{definition}
A continuous map $f:U\longrightarrow F$ from an open subset $U$ of a Hausdorff
locally convex vector space $E$ to a space of the same type $F$ is called
C$^{1}$-differentiable if it is differentiable at every $x\in U$, and if the
derivative%
\[%
\begin{array}
[c]{cccc}%
Df: & U\times E & \longrightarrow & F\\
& \left(  x,v\right)  & \mapsto & Df_{x}\left(  v\right)
\end{array}
\]
is continuous.
\end{definition}

The notion of $C^{n}$-differentiability ($n\geq2$) can be defined by induction
(cf. \cite{DoGaVa}, Definition 2.2.3) and allows to define the $C^{\infty}%
$-differentiability \textit{\`{a} la Leslie} which corresponds to the
$C^{\infty}$-differentiability in the ordinary case.

We then have the following properties:

\begin{description}
\item[\textbf{(PDL 1)}] Every continuous linear map $f:E\longrightarrow F$ is
Leslie $C^{\infty}$ and $Df=F$;

\item[\textbf{(PDL 2)}] The differential at $x$ satisfies the relation%
\[
Df_{x}\left(  h\right)  =\underset{t\longrightarrow0}{\lim}\dfrac{f\left(
x+th\right)  -f\left(  x\right)  }{t}%
\]

\item[(PDL 3)] The chain rules holds.
\end{description}

\subsection{Differentiability on projective limits\label{__DifferentiabilityOnProjectiveLimits}}

The connection between projective limits of maps and differentiation is given
by the following result (\cite{DoGaVa}, Propositions 2.3.11 and 2.3.12).

\begin{proposition}
\label{P_DiffentiabilityOfProjectiveLimitsOfDifferentialFunctions} Let
$\mathbb{F}_{1}=\underleftarrow{\lim}\mathbb{E}_{1}^{i}$ and $\mathbb{F}%
_{2}=\underleftarrow{\lim}\mathbb{E}_{2}^{i}$ projective limits of Banach
spaces. Let also $f^{i}:U^{i}\longrightarrow\mathbb{E}_{2}^{i}$ be where, for
all $i\in\mathbb{N}$, $U^{i}$ is an open set of $\mathbb{E}_{1}^{i}$. We
assume that $U=\underleftarrow{\lim}U^{i}$ exists and is a non empty open
subset of $\mathbb{F}_{1}$; we also assume that $f=\underleftarrow{\lim}%
f^{i}:U\longrightarrow\mathbb{F}_{2}$ exists. Then we have:

If each $f^{i}$ is differentiable (resp. smooth), then so is $f$ and%
\[
\forall x=\left(  x^{i}\right)  \in U,\ Df_{x}=\underleftarrow{\lim}Df_{x^{i}%
}\text{.}%
\]
\end{proposition}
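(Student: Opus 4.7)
The plan is to produce the candidate derivative explicitly as the projective limit of the individual derivatives, verify that the resulting sequence is a genuine thread, and then check that the Leslie remainder has the required continuity property componentwise.

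First I would fix $x=(x^i)\in U$ and define the linear map $L_x:\mathbb{F}_1\to\mathbb{F}_2$ by $L_x(v)=\bigl(Df^i_{x^i}(v^i)\bigr)_{i\in\mathbb{N}}$ for $v=(v^i)\in\mathbb{F}_1$. The initial task is to check that this element is actually a thread in $\mathbb{F}_2$, which reduces to the identity $\gamma_i^j\circ Df^j_{x^j}=Df^i_{x^i}\circ\delta_i^j$ for all $j\geq i$. This is obtained by differentiating the compatibility relation $\gamma_i^j\circ f^j=f^i\circ\delta_i^j$: since the bonding maps $\gamma_i^j$ and $\delta_i^j$ are continuous linear, property \textbf{(PDL 1)} identifies them with their own derivatives, and the chain rule \textbf{(PDL 3)} then produces the needed equality. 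Linearity and continuity of $L_x$ follow from the universal property of the projective limit together with the corresponding properties of each $Df^i_{x^i}$.

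Next, to establish differentiability of $f$ at $x$ with derivative $L_x$, I would analyze the Leslie remainder
\[
R(t,v)=\frac{f(x+tv)-f(x)-L_x(tv)}{t}\text{ for }t\neq 0,\qquad R(0,v)=0.
\]
Composing with the canonical projection $\gamma_i:\mathbb{F}_2\to\mathbb{E}_2^i$ gives $\gamma_i\circ R(t,v)=R^i(t,v^i)$, the remainder associated with $f^i$ at $x^i$, which by hypothesis is continuous at $(0,v^i)$. Since $(t,v)\mapsto(t,v^i)$ is continuous on $\mathbb{R}\times\mathbb{F}_1$ and the topology on $\mathbb{F}_2$ is initial for the family $(\gamma_i)_{i\in\mathbb{N}}$, joint continuity of $R$ at every $(0,v)$ follows. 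For the $C^1$ statement, the map $Df:U\times\mathbb{F}_1\to\mathbb{F}_2$, $(x,v)\mapsto L_x(v)$, has coordinates $(x,v)\mapsto Df^i_{x^i}(v^i)$, each of which is continuous as a composition of the continuous projection $U\times\mathbb{F}_1\to U^i\times\mathbb{E}_1^i$ with the continuous map $Df^i$; continuity of $Df$ again follows from the initial topology on $\mathbb{F}_2$. The smoothness case then proceeds by induction on the order of differentiability, applying the same scheme to iterated derivatives.

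I expect the main technical subtlety to be that continuity of $R$ at $(0,v)$ must be joint in the scalar $t$ and the vector $v\in\mathbb{F}_1$ rather than merely at a fixed $v$, so the argument has to exploit the fact that the projective limit topology on $\mathbb{F}_2$ is generated by the continuous projections $\gamma_i$. It is exactly at this point that the Leslie definition of differentiability for each $f^i$ is used in its full strength, since the componentwise joint continuity of the $R^i$ at $(0,v^i)$ is precisely what the initial topology needs in order to yield joint continuity of $R$.
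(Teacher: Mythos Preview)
Your argument is correct and follows the natural strategy: build the candidate derivative as the thread $\bigl(Df^i_{x^i}\bigr)$, verify thread compatibility by differentiating $\gamma_i^j\circ f^j=f^i\circ\delta_i^j$ via \textbf{(PDL~1)} and \textbf{(PDL~3)}, and then reduce continuity of the Leslie remainder (and of $Df$) to the componentwise statements using that the topology on $\mathbb{F}_2$ is initial for the projections $\gamma_i$. The induction step for higher-order differentiability is also handled appropriately.

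Note, however, that the paper does not supply its own proof of this proposition: it is quoted directly from \cite{DoGaVa}, Propositions~2.3.11 and~2.3.12, so there is no in-paper argument to compare against. Your proof is the standard one and is in line with the argument given in that reference.
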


\section{Shift operators\label{_ShiftOperators}}

In Analysis and Mathematical Physics, Banach representations break down. By
weakening the topological requirement, replacing the norm by a sequence of
semi-norms, one gets the notion of Fr\'{e}chet space. For the subsections
\ref{__FrechetSpaces} (resp. \ref{__TheFrechetSpaceHF1F2}), the reader is
referred to \cite{Bour},  \cite{RobRob} and \cite{Tre} (resp. \cite{DoGaVa}).

\subsection{Fr\'{e}chet spaces\label{__FrechetSpaces}}

\begin{definition}
\label{D_FrechetSpace}A Fr\'{e}chet space is a Hausdorff, locally convex
topological vector space that is metrizable and complete.
\end{definition}

The topology of a Fr\'{e}chet space $\mathbb{F}$ can be induced by a sequence
of semi-norms $\left(  \nu_{n}\right)  _{n\in\mathbb{N}}$ that is complete
with respect to such a sequence.

Recall that $\mathbb{F}$ is complete with respect to this topology if and only
if every sequence $\left(  x_{i}\right)  _{i\in\mathbb{N}}$ in $\mathbb{F}$ is
such that
\[
\forall n\in\mathbb{N},\forall\varepsilon>0,\exists i_{\varepsilon}%
\in\mathbb{N}:\forall\left(  j,k\right)  \in\mathbb{N}^{2},k\geq j\geq
i_{\varepsilon},\nu_{n}\left(  x_{k}-x_{j}\right)  <\varepsilon
\]
converges in $\mathbb{F}$ where the convergence in this Fr\'{e}chet space is
controlled by all the semi-norms $\nu_{n}$:%
\[
\underset{i\longrightarrow+\infty}{\lim}x_{i}=x\quad\Longleftrightarrow
\quad\forall n\in\mathbb{N},\underset{i\longrightarrow+\infty}{\lim}\nu
_{n}\left(  x_{i}-x\right)  =0
\]

\begin{example}
\label{Ex_EspaceDesSuitesReelles}The space of real sequences $\mathbb{R}%
^{\mathbb{N}}=\prod\limits_{n\in N}\mathbb{R}^{n}$ endowed with the usual
topology is a Fr\'{e}chet space where the corresponding sequence of semi-norms
is given by%
\[
\nu_{n}\left(  \left(  x_{i}\right)  _{i\in\mathbb{N}}\right)  =\sum
\limits_{k=0}^{n}\left\vert x_{k}\right\vert
\]

Metrizability is defined from $d$ as follows%
\[
d\left(  x,y\right)  =\sum\limits_{k=0}^{+\infty}\dfrac{\left\vert y_{k}%
-x_{k}\right\vert }{2^{k}\left(  1+\left\vert y_{k}-x_{k}\right\vert \right)
}%
\]

and the completeness is inherited from that of each $\mathbb{R}$ of the
infinite product.
\end{example}

The notion of Fr\'{e}chet space is closely related with the projective limit
of Banach spaces.

If $\left\{  \left(  \mathbb{B}_{n},\left\Vert \ \right\Vert _{n}\right)
\right\}  _{n\in\mathbb{N}}$ is a projective sequence of Banach spaces, then
$\underleftarrow{\lim}\mathbb{B}_{n}$ is a Fr\'{e}chet space (cf.
\cite{DoGaVa}, Theorem 2.3.7) where the sequence $\left(  \nu_{n}\right)
_{n\in\mathbb{N}}$ of semi-norms is given by%
\[
\forall x=\left(  x_{n}\right)  _{n\in\mathbb{N}}\in\underleftarrow{\lim
}\mathbb{B}_{n},\ \nu_{n}\left(  x\right)  =\sum\limits_{i=0}^{n}\left\Vert
x_{n}\right\Vert _{n}%
\]

Conversely, if $\mathbb{F}$ is a Fr\'{e}chet space with associated semi-norms
$\nu_{n}$, the completion $\mathbb{F}_{n}$ of the normed space $\mathbb{F}%
/\ker\nu_{n}$ is a Banach space called the \textit{local Banach space
associated to the semi-norm} $\nu_{n}$. It will be denoted by $\left(
\mathbb{F}_{n},\left\Vert \ \right\Vert _{n}\right)  $ where $\left\Vert
\ \right\Vert _{n}$ is the norm associated to $\nu_{n}$. We then get a
projective system $\left\{  \left(  \mathbb{F}_{i},\pi_{i}^{j}\right)
\right\}  _{\left(  i,j\right)  \in\mathbb{N}^{2},\ j\geq i}$ of Banach spaces
whose bonding maps are%
\[%
\begin{array}
[c]{cccc}%
\pi_{i}^{j}: & \mathbb{F}_{j} & \longrightarrow & \mathbb{F}_{i}\\
& \left[  x+\ker\nu_{j}\right]  _{j} & \longmapsto & \left[  x+\ker\nu
_{i}\right]  _{i}%
\end{array}
\]
where the bracket $\left[  \ \ \right]  _{n}$ corresponds to the associated
equivalence class. $\mathbb{F}$ will be identified with the projective limit
$\underleftarrow{\lim}\mathbb{F}_{i}$ (cf. \cite{DoGaVa}, Theorem 2.3.8).

The representation of Fr\'{e}chet spaces as projective limits of Banach spaces
is very interesting: Issues arising in the Fr\'{e}chet framework can be solved
by considering their components in the Banach factors of the associated
projective sequence. So different pathological entities in the Fr\'{e}chet
framework can be replaced by approximations compatible with the inverse
limits, e.g. ILB-Lie groups (\cite{Omo}) or projective limits of Banach Lie
groups (\cite{Gal1}), manifolds (\cite{AbbMan}), bundles (\cite{Gal2},
\cite{AghSur}), algebroids (\cite{Cab}), connections and differential
equations (\cite{ADGS}).

\subsection{The Fr\'{e}chet space $\mathcal{H}\left(  \mathbb{F}_{1},\mathbb{F}_{2}\right) $ \label{__TheFrechetSpaceHF1F2}}

Let $\mathbb{F}_{1}$ (resp. $\mathbb{F}_{2}$)$\mathbb{\ }$be a Fr\'{e}chet
space and $\left(  \nu_{1}^{n}\right)  _{n\in\mathbb{N}}$ (resp. $\left(
\nu_{2}^{n}\right)  _{n\in\mathbb{N}}$) the sequence of semi-norms of
$\mathbb{F}_{1}$(resp. $\mathbb{F}_{2}$).

Recall (\cite{Vog}, 2.) that a linear map $L:\mathbb{F}_{1}\longrightarrow
\mathbb{F}_{2}$ is \textit{continuous} if%
\[
\forall n\in\mathbb{N},\exists k_{n}\in\mathbb{N},\exists C_{n}>0:\forall
x\in\mathbb{F}_{1},\nu_{2}^{n}\left(  L.x\right)  \leq C_{n}\nu_{1}^{k_{n}%
}\left(  x\right)
\]

The space $\mathcal{L}\left(  \mathbb{F}_{1},\mathbb{F}_{2}\right)  $ of
continuous linear maps between both these Fr\'{e}chet spaces generally drops
out of the Fr\'{e}chet category. Indeed, $\mathcal{L}\left(  \mathbb{F}%
_{1},\mathbb{F}_{2}\right)  $ is a Hausdorff locally convex topological vector
space whose topology is defined by the family of semi-norms $\left\{
p_{n,B}\right\}  $:%
\[
p_{n,B}\left(  L\right)  =\sup\left\{  \nu_{2}^{n}\left(  L.x\right)  ,x\in
B\right\}
\]

where $n\in\mathbb{N}$ and $B$ is any bounded subset of $\mathbb{F}_{1}$
containing $0_{\mathbb{F}_{1}.}$ This topology is not metrizable since the
family $\left\{  p_{n,B}\right\}  $ is not countable. \newline So
$\mathcal{L}\left(  \mathbb{F}_{1},\mathbb{F}_{2}\right)  $ will be replaced,
under certain assumptions, by a projective limit of appropriate functional
spaces as introduced in \cite{Gal2}.

If we denote by $\mathcal{L}\left(  \mathbb{B}_{1}^{n},\mathbb{B}_{2}%
^{n}\right)  $ the space of linear continuous maps (or equivalently bounded
linear maps because $\mathbb{B}_{1}^{n}$ and $\mathbb{B}_{2}^{n}$ are normed
spaces), we then have the following result (\cite{DoGaVa}, Theorem 2.3.10).

\begin{theorem}
\label{T_HF1F2} The space of all continuous linear maps between $\mathbb{F}%
_{1}$ and $\mathbb{F}_{2}$ that can be represented as projective limits%
\[
\mathcal{H}\left(  \mathbb{F}_{1},\mathbb{F}_{2}\right)  =\left\{  \left(
L_{n}\right)  \in\prod\limits_{n\in\mathbb{N}}\mathcal{L}\left(
\mathbb{B}_{1}^{n},\mathbb{B}_{2}^{n}\right)  :\underleftarrow{\lim}%
L_{n}\text{ exists}\right\}
\]
is a Fr\'{e}chet space.
\end{theorem}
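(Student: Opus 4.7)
The plan is to realise $\mathcal{H}(\mathbb{F}_1,\mathbb{F}_2)$ as a closed vector subspace of a countable product of Banach spaces, from which the Fréchet structure will follow automatically. Throughout, I denote by $(\pi_1)_i^j$ and $(\pi_2)_i^j$ the bonding maps of the projective sequences of local Banach spaces associated respectively with $\mathbb{F}_1$ and $\mathbb{F}_2$, as constructed in Section~\ref{__FrechetSpaces}.

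First, I would equip the product $\mathcal{P} := \prod_{n \in \mathbb{N}} \mathcal{L}(\mathbb{B}_1^n, \mathbb{B}_2^n)$ with its natural product topology. Each factor is a Banach space under the operator norm, and a countable product of Banach spaces is Fréchet: its topology is induced by the countable family of seminorms $p_n\bigl((L_i)_i\bigr) = \|L_n\|_{op}$, it is Hausdorff and locally convex by the corresponding properties of each factor, metrisable by the standard construction illustrated in Example~\ref{Ex_EspaceDesSuitesReelles}, and complete by componentwise completeness. Next, I would identify $\mathcal{H}(\mathbb{F}_1,\mathbb{F}_2)$ inside $\mathcal{P}$ as the set of sequences $(L_n)_n$ satisfying the thread-like compatibility conditions $(\pi_2)_n^{n+1} \circ L_{n+1} = L_n \circ (\pi_1)_n^{n+1}$ for every $n$. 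This is precisely the condition ensuring that $(L_i)_i$ is a projective system of continuous linear maps in the sense of Section~\ref{__ProjectiveLimitsTopologicalSpaces}, so that $\underleftarrow{\lim} L_n$ exists and defines a continuous linear map $\mathbb{F}_1 \to \mathbb{F}_2$.

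Each such equation is linear in $(L_i)_i$, and the compositions $L \mapsto (\pi_2)_n^{n+1} \circ L$ and $L \mapsto L \circ (\pi_1)_n^{n+1}$ are continuous between the relevant Banach operator spaces (being given by left or right composition with a fixed bounded map). Hence each compatibility equation cuts out a closed vector subspace of $\mathcal{P}$, and $\mathcal{H}(\mathbb{F}_1,\mathbb{F}_2) = \bigcap_{n \in \mathbb{N}} \mathcal{C}_n$ is a countable intersection of such closed subspaces and therefore still closed. Since a closed vector subspace of a Fréchet space is itself a Fréchet space, the conclusion follows, with the induced topology generated by the seminorms $\mu_n\bigl((L_i)_i\bigr) = \|L_n\|_{op}$.

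The step I expect to be the most delicate is the second one: pinning down exactly which compatibility condition is equivalent to ``$\underleftarrow{\lim} L_n$ exists'' as a continuous linear map between the Fréchet spaces, and then checking that operator-norm convergence componentwise genuinely preserves this condition in the limit. Both points are essentially formal, but they require carefully unwinding the definitions from Sections~\ref{__ProjectiveLimitsTopologicalSpaces} and \ref{__FrechetSpaces}; once this is done, the remaining verifications (linearity, closedness, Fréchet-ness of the product) are routine.
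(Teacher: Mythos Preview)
Your argument is correct: realising $\mathcal{H}(\mathbb{F}_1,\mathbb{F}_2)$ as the closed linear subspace of the Fr\'echet product $\prod_{n}\mathcal{L}(\mathbb{B}_1^n,\mathbb{B}_2^n)$ cut out by the countably many continuous linear equations $(\pi_2)_n^{n+1}\circ L_{n+1}-L_n\circ(\pi_1)_n^{n+1}=0$ is a clean and complete proof.

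Note, however, that the paper does not give its own proof of this statement; it is quoted from \cite{DoGaVa}, Theorem~2.3.10. The argument there, which the present paper reproduces in the analogous setting of Theorem~\ref{T_FrechetStructureOnProjectiveLimitOfLnrs}, is organised differently: one first forms, for each $n$, the \emph{finite} truncation
\[
\mathcal{H}_n(\mathbb{F}_1,\mathbb{F}_2)=\Bigl\{(L_0,\dots,L_n)\in\prod_{i=0}^{n}\mathcal{L}(\mathbb{B}_1^i,\mathbb{B}_2^i):\ (\pi_2)_i^{j}\circ L_{j}=L_i\circ(\pi_1)_i^{j}\text{ for }j\geq i\Bigr\},
\]
observes that this is a closed subspace of a finite product of Banach spaces and hence Banach, checks that the obvious truncation maps turn $\{\mathcal{H}_n\}$ into a projective sequence of Banach spaces, and then invokes the general principle of Section~\ref{__FrechetSpaces} that such a projective limit is Fr\'echet. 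Your route bypasses this intermediate Banach stage by working directly inside the infinite product, which is already Fr\'echet; the price is that you do not obtain the explicit presentation $\mathcal{H}(\mathbb{F}_1,\mathbb{F}_2)=\underleftarrow{\lim}\mathcal{H}_n$ as a projective limit of Banach spaces, a description the paper relies on elsewhere (for instance in the construction of $\mathcal{L}^{r,s}(\mathbb{F}_1,\mathbb{F}_2)$ and in applying Proposition~\ref{P_DiffentiabilityOfProjectiveLimitsOfDifferentialFunctions}).
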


For this sequence $\left(  L_{n}\right)  $ of linear maps, for any integer
$0\leq i\leq j$, the following diagram is commutative%
\[%
\begin{array}
[c]{ccc}%
\mathbb{B}_{1}^{i} & \overset{{\vphantom{A}}_{1}\delta_{i}^{j}}{\longleftarrow
} & \mathbb{B}_{1}^{j}\\
L_{i}\downarrow &  & \downarrow L_{j}\\
\mathbb{B}_{2}^{i} & \overset{{\vphantom{A}}_{2}\delta_{i}^{j}}{\longleftarrow
} & \mathbb{B}_{2}^{j}%
\end{array}
\]

\subsection{Shift operators\label{__ShiftOperators}}

We assume that $\mathbb{F}_{1}=\underleftarrow{\lim}\mathbb{B}_{1}^{n}$ (resp.
$\mathbb{F}_{2}=\underleftarrow{\lim}\mathbb{B}_{2}^{n}$) is a Fr\'{e}chet
space where $\left\{  \left(  \mathbb{B}_{1}^{i},_{{\vphantom{A}}1}\delta
_{i}^{j}\right)  ,\left\Vert \ \right\Vert _{1}^{i}\right\}  _{\left(
i,j\right)  \in\mathbb{N}^{2},\ j\geq i}$(resp. $\left\{  \left(
\mathbb{B}_{2}^{i},_{{\vphantom{A}}2}\delta_{i}^{j}\right)  ,\left\Vert
\ \right\Vert _{2}^{i}\right\}  _{\left(  i,j\right)  \in\mathbb{N}%
^{2},\ j\geq i}$) is a projective sequence of Banach spaces.

\begin{definition}
\label{D_ShiftOperator}A linear map $L:\mathbb{B}_{1}^{n+r}\longrightarrow
\mathbb{B}_{2}^{n-s}$ is called a shift operator of base $n$ and type $\left(
r,s\right)  \in\mathbb{N}\times\mathbb{N}$ where $n\geq s$, if there exists
$C_{n}>0$ such that:
\[
\forall x\in\mathbb{B}_{1}^{n+r},\ \left\Vert L.x\right\Vert _{2}^{n-s}\leq
C_{n}\left\Vert x\right\Vert _{1}^{n+r}%
\]
\newline
\end{definition}

\begin{notation}
$\mathcal{L}_{n}^{r,s}\left(  \mathbb{F}_{1},\mathbb{F}_{2}\right)  $ denotes
the set of shift operators of base $n$ and type $\left(  r,s\right)  $.
\end{notation}

\begin{lemma}
$\mathcal{L}_{n}^{r,s}\left(  \mathbb{F}_{1},\mathbb{F}_{2}\right)  $ endowed
with the norm $\left\Vert \ \right\Vert _{L_{n}^{r,s}}$ defined by%
\[
\left\Vert L\right\Vert _{L_{n}^{r,s}}=\sup_{\left\Vert x\right\Vert
_{1}^{n+r}}\left\Vert L.x\right\Vert _{2}^{n-s}%
\]
is a Banach space.
\end{lemma}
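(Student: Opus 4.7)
The plan is to recognize that, although the terminology of ``shift operator'' is new, the space $\mathcal{L}_n^{r,s}(\mathbb{F}_1,\mathbb{F}_2)$ is nothing other than the classical space $\mathcal{L}(\mathbb{B}_1^{n+r},\mathbb{B}_2^{n-s})$ of bounded linear maps between two Banach spaces, equipped with its usual operator norm. The defining inequality $\|L\cdot x\|_2^{n-s}\leq C_n\|x\|_1^{n+r}$ of Definition \ref{D_ShiftOperator} is exactly the boundedness condition for a linear map from $(\mathbb{B}_1^{n+r},\|\cdot\|_1^{n+r})$ to $(\mathbb{B}_2^{n-s},\|\cdot\|_2^{n-s})$, and $\|L\|_{L_n^{r,s}}$ is by construction the infimum of admissible constants $C_n$, i.e.\ the operator norm. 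Once this identification is made, the result follows from the classical theorem asserting that $\mathcal{L}(X,Y)$ is a Banach space whenever $Y$ is.

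Concretely, I would first check that $\mathcal{L}_n^{r,s}(\mathbb{F}_1,\mathbb{F}_2)$ is a vector space (closed under pointwise sum and scalar multiplication, both of which preserve the shift estimate) and that $\|\cdot\|_{L_n^{r,s}}$ satisfies the norm axioms: positivity and homogeneity are immediate, and the triangle inequality descends from the triangle inequality in $\mathbb{B}_2^{n-s}$ applied pointwise before taking the supremum over the unit ball of $\mathbb{B}_1^{n+r}$.

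For completeness, I would run the standard textbook argument. Given a Cauchy sequence $(L_k)_{k\in\mathbb{N}}$ in $\mathcal{L}_n^{r,s}(\mathbb{F}_1,\mathbb{F}_2)$, the inequality $\|L_k\cdot x-L_j\cdot x\|_2^{n-s}\leq\|L_k-L_j\|_{L_n^{r,s}}\|x\|_1^{n+r}$ shows that $(L_k\cdot x)_{k\in\mathbb{N}}$ is Cauchy in $\mathbb{B}_2^{n-s}$ for every $x\in\mathbb{B}_1^{n+r}$. Since $\mathbb{B}_2^{n-s}$ is complete, I can define $L\cdot x:=\lim_{k\to\infty}L_k\cdot x$. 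Linearity of $L$ follows by passing to the limit in the linearity identities for the $L_k$. Boundedness follows from the fact that a Cauchy sequence is norm-bounded: if $\|L_k\|_{L_n^{r,s}}\leq M$ for all $k$, then $\|L\cdot x\|_2^{n-s}=\lim_k\|L_k\cdot x\|_2^{n-s}\leq M\|x\|_1^{n+r}$, so $L\in\mathcal{L}_n^{r,s}(\mathbb{F}_1,\mathbb{F}_2)$. Finally, for convergence in norm, given $\varepsilon>0$ I choose $K$ with $\|L_k-L_j\|_{L_n^{r,s}}<\varepsilon$ for $k,j\geq K$; letting $j\to\infty$ in $\|(L_k-L_j)\cdot x\|_2^{n-s}\leq\varepsilon\|x\|_1^{n+r}$ yields $\|L_k-L\|_{L_n^{r,s}}\leq\varepsilon$ for $k\geq K$.

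There is essentially no obstacle: the content of the lemma is that shift operators of fixed base and type are a disguised instance of bounded linear operators between Banach spaces. The only subtlety worth flagging is the careful distinction between the constant $C_n$ of the definition (any admissible constant) and the operator norm itself (the smallest such constant), which must be identified so that the supremum in the definition of $\|\cdot\|_{L_n^{r,s}}$ is well-defined and finite on $\mathcal{L}_n^{r,s}(\mathbb{F}_1,\mathbb{F}_2)$.
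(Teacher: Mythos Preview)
Your proof is correct. The paper states this lemma without proof, evidently regarding it as immediate once one recognizes---as you make explicit---that $\mathcal{L}_n^{r,s}(\mathbb{F}_1,\mathbb{F}_2)$ is nothing other than $\mathcal{L}(\mathbb{B}_1^{n+r},\mathbb{B}_2^{n-s})$ equipped with its operator norm, so that the result is the classical completeness of bounded operators into a Banach space.
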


A linear operator of base $n$ and type $\left(  r,s\right)  $ is continuous.

\begin{example}
\label{Ex_DifferentialOperatorOrderr}(\cite{Ham}, 1.1.2, Examples (4) and
1.2.3 Examples (3)). Let $X$ be a compact manifold. Then $C^{\infty}\left(
X\right)  $ is a Fr\'{e}chet space and for any linear partial differential
operator $L$ of degree $r,$ we have $\left\Vert L.f\right\Vert _{n}%
\leq\left\Vert f\right\Vert _{n+r}$; so $L$ is a shift operator of base $n$
and type $\left(  r,0\right)  $ (tame operator in Hamilton's terminology).
\end{example}

\subsection{Projective limit of shift operators\label{__ProjectiveLimitOfShiftOperators}}

\begin{lemma}
\label{L_BanachSpaceLrssn}\bigskip For any integer $n\geq s$, the following
set%
\[%
\begin{array}
[c]{cc}%
\mathcal{L}_{s,n}^{r,s}\left(  \mathbb{F}_{1},\mathbb{F}_{2}\right)  = &
\left\{
\begin{array}
[c]{c}%
\left(  L_{s},\dots,L_{n}\right)  \in\mathcal{L}_{s}^{r,s}\left(
\mathbb{F}_{1},\mathbb{F}_{2}\right)  \times\cdots\times\mathcal{L}_{n}%
^{r,s}\left(  \mathbb{F}_{1},\mathbb{F}_{2}\right)  :\\
\forall\left(  i,j\right)  \in\mathbb{N}^{2}:n\geq j\geq i\geq
s,{\vphantom{A}}_{2}\delta_{i-s}^{j-s}\circ L_{j}=L_{i}\circ{\vphantom{A}}%
_{1}\delta_{i+r}^{j+r}%
\end{array}
\right\}
\end{array}
\]
can be endowed with a structure of Banach space relatively to the norm
$\left\Vert \ \right\Vert _{s,n}^{r,s}$ defined by%
\[
\left\Vert \left(  L_{s},\dots,L_{n}\right)  \right\Vert _{s,n}^{r,s}%
=\sum\limits_{i=s}^{n}\left\Vert L_{i}\right\Vert _{L_{i}^{r,s}}%
\]

\end{lemma}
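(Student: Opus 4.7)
The plan is to exhibit $\mathcal{L}_{s,n}^{r,s}(\mathbb{F}_{1},\mathbb{F}_{2})$ as a closed subspace of the product Banach space $\prod_{i=s}^{n}\mathcal{L}_{i}^{r,s}(\mathbb{F}_{1},\mathbb{F}_{2})$ and conclude completeness.

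First I would check that $\|\cdot\|_{s,n}^{r,s}$ is a norm: since each $\|\cdot\|_{L_{i}^{r,s}}$ is a norm on the Banach space $\mathcal{L}_{i}^{r,s}(\mathbb{F}_{1},\mathbb{F}_{2})$ (shown in the preceding lemma), the finite sum is a norm on the product space, and the compatibility conditions
\[
{}_{2}\delta_{i-s}^{j-s}\circ L_{j}=L_{i}\circ{}_{1}\delta_{i+r}^{j+r}
\]
cut out a linear subspace of this product. So $(\mathcal{L}_{s,n}^{r,s},\|\cdot\|_{s,n}^{r,s})$ is at least a normed vector space.

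Next I would verify closedness. Let $(L_{s}^{(k)},\ldots,L_{n}^{(k)})_{k\in\mathbb{N}}$ be a sequence in $\mathcal{L}_{s,n}^{r,s}$ converging in $\|\cdot\|_{s,n}^{r,s}$ to $(L_{s},\ldots,L_{n})\in\prod_{i=s}^{n}\mathcal{L}_{i}^{r,s}$. Since the sum-norm dominates each summand, $L_{i}^{(k)}\to L_{i}$ in $\mathcal{L}_{i}^{r,s}(\mathbb{F}_{1},\mathbb{F}_{2})$, and in particular pointwise on $\mathbb{B}_{1}^{i+r}$ (as operator-norm convergence implies pointwise convergence, the target norm $\|\cdot\|_{2}^{i-s}$ being used). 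For each fixed pair $s\le i\le j\le n$ and each $x\in\mathbb{B}_{1}^{j+r}$, the bonding maps ${}_{1}\delta_{i+r}^{j+r}$ and ${}_{2}\delta_{i-s}^{j-s}$ are continuous linear, so passing to the limit in
\[
{}_{2}\delta_{i-s}^{j-s}\bigl(L_{j}^{(k)}(x)\bigr)=L_{i}^{(k)}\bigl({}_{1}\delta_{i+r}^{j+r}(x)\bigr)
\]
yields the same identity for the limit operators. Hence the limit lies in $\mathcal{L}_{s,n}^{r,s}$, proving the subspace is closed.

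Finally, since $\prod_{i=s}^{n}\mathcal{L}_{i}^{r,s}(\mathbb{F}_{1},\mathbb{F}_{2})$ equipped with the sum of the Banach norms is a Banach space (as a finite product of Banach spaces), any closed linear subspace is itself a Banach space, and the induced norm on $\mathcal{L}_{s,n}^{r,s}$ is exactly $\|\cdot\|_{s,n}^{r,s}$. I do not anticipate a genuine obstacle: the only point requiring mild care is recording that operator-norm convergence on $\mathcal{L}_{i}^{r,s}$ transfers the compatibility relations through the continuous bonding maps, which is straightforward.
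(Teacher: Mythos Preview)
Your proof is correct and follows exactly the same approach as the paper's own proof, which simply states that $\mathcal{L}_{s,n}^{r,s}(\mathbb{F}_{1},\mathbb{F}_{2})$ is a closed subspace of the Banach space $\mathcal{L}_{s}^{r,s}(\mathbb{F}_{1},\mathbb{F}_{2})\times\cdots\times\mathcal{L}_{n}^{r,s}(\mathbb{F}_{1},\mathbb{F}_{2})$ and hence Banach. You have merely spelled out the verification of closedness that the paper leaves implicit.
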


\begin{proof}
Since $\mathcal{L}_{s,n}^{r,s}\left(  \mathbb{F}_{1},\mathbb{F}_{2}\right)  $
is a closed subspace of the Banach space $\mathcal{L}_{s}^{r,s}\left(
\mathbb{F}_{1},\mathbb{F}_{2}\right)  \times\cdots\times\mathcal{L}_{n}%
^{r,s}\left(  \mathbb{F}_{1},\mathbb{F}_{2}\right)  $, it is also a Banach space.
\end{proof}

\begin{lemma}
\label{L_CanonicalProjectionLrsjsLrsis}For $j\geq i\geq s$, the canonical
projections
\[%
\begin{array}
[c]{cccc}%
\pi_{i}^{j}: & \mathcal{L}_{s,j}^{r,s}\left(  \mathbb{F}_{1},\mathbb{F}%
_{2}\right)  & \longrightarrow & \mathcal{L}_{s,i}^{r,s}\left(  \mathbb{F}%
_{1},\mathbb{F}_{2}\right) \\
& \left(  L_{s},\dots,L_{j}\right)  & \longmapsto & \left(  L_{s},\dots
,L_{i}\right)
\end{array}
\]
are linear and continuous.
\end{lemma}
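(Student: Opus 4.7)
The statement is largely a bookkeeping verification once one notes the shape of the norms involved, so the plan is short. I will treat linearity and continuity separately.

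For linearity, I would simply observe that both $\mathcal{L}_{s,j}^{r,s}(\mathbb{F}_{1},\mathbb{F}_{2})$ and $\mathcal{L}_{s,i}^{r,s}(\mathbb{F}_{1},\mathbb{F}_{2})$ inherit their vector space structure componentwise from the product of the Banach spaces $\mathcal{L}_{k}^{r,s}(\mathbb{F}_{1},\mathbb{F}_{2})$. The map $\pi_{i}^{j}$ is just the restriction of the canonical coordinate projection of these products to the first $i-s+1$ coordinates, which is tautologically linear. One should briefly mention that the image lies in $\mathcal{L}_{s,i}^{r,s}(\mathbb{F}_{1},\mathbb{F}_{2})$: the compatibility condition ${\vphantom{A}}_{2}\delta_{k-s}^{\ell-s}\circ L_{\ell}=L_{k}\circ{\vphantom{A}}_{1}\delta_{k+r}^{\ell+r}$ required for $(L_{s},\dots,L_{i})$ holds automatically since it is already assumed for all pairs $s\leq k\leq \ell\leq j$, and in particular for all pairs with $\ell\leq i$.

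For continuity, I would exploit the explicit form of the norm given in Lemma \ref{L_BanachSpaceLrssn}. Since each summand $\|L_{k}\|_{L_{k}^{r,s}}$ is non-negative, for $(L_{s},\dots,L_{j})\in \mathcal{L}_{s,j}^{r,s}(\mathbb{F}_{1},\mathbb{F}_{2})$ one has
\[
\left\Vert \pi_{i}^{j}(L_{s},\dots,L_{j})\right\Vert_{s,i}^{r,s}=\sum_{k=s}^{i}\left\Vert L_{k}\right\Vert_{L_{k}^{r,s}}\leq\sum_{k=s}^{j}\left\Vert L_{k}\right\Vert_{L_{k}^{r,s}}=\left\Vert (L_{s},\dots,L_{j})\right\Vert_{s,j}^{r,s}.
\]
This shows that $\pi_{i}^{j}$ is bounded with operator norm at most $1$ (indeed equal to $1$, attained on any tuple whose components of index $>i$ vanish), which, together with linearity, yields continuity.

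There is no real obstacle here; the only subtlety worth flagging is to make the preliminary check that $\pi_{i}^{j}$ actually lands in $\mathcal{L}_{s,i}^{r,s}(\mathbb{F}_{1},\mathbb{F}_{2})$, so that the target norm makes sense. Everything else reduces to the monotonicity of a finite sum of non-negative terms.
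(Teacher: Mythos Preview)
Your proof is correct and follows essentially the same approach as the paper: linearity is declared obvious, and continuity is established via the identical chain of (in)equalities $\|\pi_i^j(L_s,\dots,L_j)\|_{s,i}^{r,s}=\sum_{k=s}^{i}\|L_k\|_{L_k^{r,s}}\leq\sum_{k=s}^{j}\|L_k\|_{L_k^{r,s}}=\|(L_s,\dots,L_j)\|_{s,j}^{r,s}$. Your additional remark that $\pi_i^j$ actually lands in $\mathcal{L}_{s,i}^{r,s}(\mathbb{F}_1,\mathbb{F}_2)$ is a welcome clarification that the paper leaves implicit.
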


\begin{proof}
For $j\geq i\geq s$, the linearity of $\pi_{i}^{j}$ is obvious.\newline The
continuity of $\pi_{i}^{j}$ is a consequence of%
\[%
\begin{array}
[c]{ll}%
\left\Vert \pi_{i}^{j}\left(  L_{s},\dots,L_{j}\right)  \right\Vert
_{s,i}^{r,s} & =\left\Vert \left(  L_{s},\dots,L_{i}\right)  \right\Vert
_{s,i}^{r,s}\\
& =\sum\limits_{k=s}^{i}\left\Vert L_{k}\right\Vert _{L_{k}^{r,s}}\\
& \leq\sum\limits_{k=s}^{j}\left\Vert L_{k}\right\Vert _{L_{k}^{r,s}}\\
& =\left\Vert \left(  L_{s},\dots,L_{j}\right)  \right\Vert _{s,j}^{r,s}%
\end{array}
\]

\end{proof}

We then have the following result.

\begin{theorem}
\label{T_FrechetStructureOnProjectiveLimitOfLnrs}$\left\{  \left(
\mathcal{L}_{s,i}^{r,s}\left(  \mathbb{F}_{1},\mathbb{F}_{2}\right)  ,\pi
_{i}^{j}\right)  \right\}  _{\left(  i,j\right)  \in\mathbb{N}^{2},\ j\geq
i\geq s}$is a projective sequence of Banach spaces whose projective limit
$\mathcal{L}^{r,s}\left(  \mathbb{F}_{1},\mathbb{F}_{2}\right)  $ can be
endowed with a Fr\'{e}chet structure.
\end{theorem}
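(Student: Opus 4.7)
The plan is to check that $\{(\mathcal{L}_{s,i}^{r,s}(\mathbb{F}_1,\mathbb{F}_2),\pi_i^j)\}_{j\geq i\geq s}$ satisfies the four axioms (PSTS 1)--(PSTS 4) of a projective sequence of topological spaces, and then to invoke the general fact, recalled in Section~\ref{__FrechetSpaces}, that the projective limit of a projective sequence of Banach spaces is automatically a Fr\'echet space, whose topology is induced by the sequence of semi-norms obtained as partial sums of the factor norms.

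For the first step, axiom (PSTS 1) is exactly Lemma~\ref{L_BanachSpaceLrssn}, which equips each $\mathcal{L}_{s,i}^{r,s}(\mathbb{F}_1,\mathbb{F}_2)$ with a Banach (hence topological) structure relative to $\left\Vert\ \right\Vert_{s,i}^{r,s}$. Axiom (PSTS 2) is Lemma~\ref{L_CanonicalProjectionLrsjsLrsis}, which establishes linearity and continuity of the bonding maps $\pi_i^j$. Axioms (PSTS 3) and (PSTS 4) require no real work: from the definition of $\pi_i^j$ as a tuple-truncation, $\pi_i^i$ leaves $(L_s,\dots,L_i)$ unchanged, and for $s\leq i\leq j\leq k$ both $\pi_i^j\circ\pi_j^k$ and $\pi_i^k$ restrict $(L_s,\dots,L_k)$ to its first $i-s+1$ coordinates. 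I would simply record these two routine verifications.

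For the second step, I would apply the result recalled before Example~\ref{Ex_EspaceDesSuitesReelles} (cf.\ \cite{DoGaVa}, Theorem~2.3.7). Up to the harmless reindexing $i\mapsto i-s$ that starts the sequence at $0$, this yields that
\[
\mathcal{L}^{r,s}(\mathbb{F}_1,\mathbb{F}_2) := \underleftarrow{\lim}\,\mathcal{L}_{s,i}^{r,s}(\mathbb{F}_1,\mathbb{F}_2)
\]
is a Fr\'echet space, with topology induced by the countable family of semi-norms
\[
\nu_n\bigl((L_i)_{i\geq s}\bigr) = \sum_{k=s}^{n}\left\Vert L_k\right\Vert_{L_k^{r,s}}, \qquad n\geq s.
\]

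I do not anticipate any serious obstacle: the substantive technical content -- the Banach structure on each level and the continuity of the bonding maps -- has been off-loaded to the two preceding lemmas, so the present statement essentially amounts to bookkeeping and a citation. The only point worth flagging is the shift of indexing from $0$ to $s$ in the cited result, which is purely cosmetic and affects neither completeness nor metrizability of the projective limit.
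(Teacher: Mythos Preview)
Your proposal is correct and mirrors the paper's own proof almost verbatim: the paper invokes Lemma~\ref{L_BanachSpaceLrssn} and Lemma~\ref{L_CanonicalProjectionLrsjsLrsis} for (PSTS~1)--(PSTS~2), records the obvious compatibility $\pi_i^k=\pi_i^j\circ\pi_j^k$, and then cites the general fact from Section~\ref{__FrechetSpaces} that a projective limit of Banach spaces is Fr\'echet. Your additional remarks on the explicit semi-norms and the harmless reindexing $i\mapsto i-s$ are accurate and simply make explicit what the paper leaves tacit.
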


\begin{proof}
For $k\geq j\geq i\geq s$, it is obvious that $\pi_{i}^{k}=\pi_{i}^{j}\circ
\pi_{j}^{k}$. Thus, according to Lemma \ref{L_BanachSpaceLrssn} and Lemma
\ref{L_CanonicalProjectionLrsjsLrsis}, $\left\{  \left(  \mathcal{L}%
_{s,i}^{r,s}\left(  \mathbb{F}_{1},\mathbb{F}_{2}\right)  ,\pi_{i}^{j}\right)
\right\}  _{\left(  i,j\right)  \in\mathbb{N}^{2},\ j\geq i\geq s}$ is a
projective sequence of Banach spaces. So its projective limit can be endowed
with a structure of Fr\'{e}chet space (cf. \ref{__FrechetSpaces}).
\end{proof}

\subsection{Inductive dual\label{__InductiveDual}}

Because the dual of a Fr\'{e}chet space generally drops out of the Fr\'{e}chet
category, it will be replaced by the inductive dual which is defined as a
projective limit of Banach spaces.

Let $\mathbb{F}$ be a graded Fr\'{e}chet space and let $\left(  \mathbb{F}%
_{n}\right)  _{n\in\mathbb{N}}$ be the sequence of associated Banach spaces.
We then consider, for $n\in\mathbb{N}$, the following space%
\[
\mathbb{F}_{n}^{0}=\left\{  \widehat{\omega_{n}}=\left(  \omega_{0}%
,\dots\omega_{n}\right)  \in\prod\limits_{i=0}^{n}\mathbb{F}_{i}^{^{\prime}%
}\right\}
\]

where $\mathbb{F}_{i}^{^{\prime}}$ is the topological dual of the Banach space
$\mathbb{F}_{i}$. Then $\mathbb{F}_{n}^{0}$ is a Banach space for the norm
$\left\Vert \ \right\Vert ^{n}$ defined by%
\[
\left\Vert \widehat{\omega_{n}}\right\Vert ^{n}=\sum\limits_{i=0}^{n}%
\max_{\left\Vert x_{i}\right\Vert _{i}=1}\left\vert \omega_{i}\left(
x_{i}\right)  \right\vert
\]

\begin{definition}
The projectif limit of the sequence $\left\{  \left(  \mathbb{F}_{n}^{0}%
,\Pi_{n}^{n+1}\right)  \right\}  _{n\in\mathbb{N}^{\ast}}$, where $\Pi
_{n}^{n+1}:\mathbb{F}_{n+1}^{0}\longrightarrow\mathbb{F}_{n}^{0}$ is the
natural projection, is called the inductive dual of $\mathbb{F}$ et denoted by
$\mathbb{F}^{0}$.
\end{definition}

The inductive dual $\mathbb{F}^{0}$ is a graded Fr\'{e}chet space.

The \textit{inductive cotangent bundle} $T^{0}\mathbb{F}$ is defined as the
trivial bundle of base $\mathbb{F}$ and fiber $\mathbb{F}^{0}$ and appears as
as the projective limit of $\left(  \mathbb{F}_{n}\times\mathbb{F}_{n}%
^{0},\left\Vert \ \right\Vert _{n}+\left\Vert \ \right\Vert ^{n}\right)  $. An
\textit{inductive differential form} is a smooth section of this bundle.

\section{Projective sequence of local shift morphisms\label{_ProjectiveSequenceOfLocalShiftMorphisms}}

\subsection{Local shift morphisms\label{__LocalShiftMorphisms}}

Let $\mathbb{F}_{1}$ (resp. $\mathbb{F}_{2},\mathbb{F}_{3}$) be a graded
Fr\'{e}chet space and let $\left(  \mathbb{F}_{1}^{n},\left\Vert \ \right\Vert
_{n}^{1}\right)  _{n\in\mathbb{N}}$ (resp. $\left(  \mathbb{F}_{2}%
^{n},\left\Vert \ \right\Vert _{n}^{2}\right)  _{n\in\mathbb{N}},\left(
\mathbb{F}_{3}^{n},\left\Vert \ \right\Vert _{n}^{3}\right)  _{n\in\mathbb{N}%
}$) be the sequence of associated local Banach spaces.

\begin{definition}
\label{D_LocalShiftMorphismType_rs} Let $n\in\mathbb{N}$ such that $n-s\geq0$.
A smooth map%
\[
\varphi:U_{n}\longrightarrow\mathcal{L}\left(  \mathbb{F}_{2}^{n+r}%
,\mathbb{F}_{3}^{n-s}\right)
\]
where $U_{n}$ is an open set of $\mathbb{F}_{1}^{n}$, is called a local shift
morphism of base $n$ and type $\left(  r,s\right)  \in\mathbb{N}%
\times\mathbb{N}$ above $U_{n}$.
\end{definition}

\subsection{Projective sequence of local shift morphisms\label{__ProjectiveSequenceOfLocalShiftMorphisms}}

\begin{definition}
\label{D_ProjectiveSequenceOfLocalShiftMorphisms} A sequence $\left(
\varphi_{n}\right)  _{n\in\mathbb{N},\ n\geq s}$ of local shift morphisms
$\varphi_{n}$ of type $\left(  r,s\right)  \in\mathbb{N}\times\mathbb{N}$
above $U_{n}$ is said to be a projective sequence of local shift morphisms if

\begin{description}
\item[\textbf{(PSLSM 1)}] $U_{s}\supset U_{s+1}\supset\cdots\supset
U_{n}\supset U_{n+1}\supset\cdots$ and $U=\bigcap\limits_{n=s}^{+\infty}U_{n}$
is a non empty open set of $\mathbb{F}_{1}$;

\item[\textbf{(PSLSM 2)}] For any $q=(q_{n})_{n\in\mathbb{N}}\in U$, we have
the following commutative diagram:
\end{description}
$
\xymatrix {
	&& U_n \times \mathbb{F}_2^{n+r} \ar[dll]_{(\operatorname{Id}_{U_n},\,\varphi_n(q_n))\;\;\;} \ar[ddl] &&&
	U_{n+1} \times \mathbb{F}_2^{n+r+1} \ar[lll]_{{\vphantom{A}}_{1}\delta_n^{n+1} \times {\vphantom{A}}_{2}\delta_{n+r}^{n+r+1}} \ar[dll]_{(\operatorname{Id}_{U_{n+1}},\,\varphi_{n+1}(q_{n+1}))\;\;\;} \ar[ddl]^{{\vphantom{A}}_{2}\pi_{n+1}^{n+1+r}} \\
	U_n  \times \mathbb{F}_3^{n-s}\ar[dr]_{{\vphantom{A}}_{3}\pi_n^{n-s}} &&& U_{n+1}  \times \mathbb{F}_3^{n-s+1} \ar[lll]_{\;\;{\vphantom{A}}_{1}\delta_n^{n+1} \times {\vphantom{A}}_{3}\delta_{n-s}^{n-s+1}} \ar[dr] && \\
	& U_n &&& U_{n+1} \ar[lll]_{{\vphantom{A}}_{1}\delta_n^{n+1}} &\\
}
$

\end{definition}

\begin{theorem}
\label{T_SmoothStructureOnProjectiveLimitOfLocalShiftMorphims}The projective
limit $\underleftarrow{\lim}\varphi_{n}$ of a projective sequence of local
shift morphisms $\varphi_{n}$ of type $\left(  r,s\right)  \in\mathbb{N}%
\times\mathbb{N}$ above $U_{n}$ is a smooth map from the open set
$U=\bigcap\limits_{n=s}^{+\infty}U_{n}$ of the Fr\'{e}chet space
$\mathbb{F}_{1}$ to the Fr\'{e}chet space $\mathcal{L}^{r,s}\left(
\mathbb{F}_{2},\mathbb{F}_{3}\right)  $.
\end{theorem}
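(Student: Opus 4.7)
The plan is to apply Proposition~\ref{P_DiffentiabilityOfProjectiveLimitsOfDifferentialFunctions} after rewriting $\underleftarrow{\lim}\varphi_n$ as the projective limit of an auxiliary sequence of smooth Banach-valued maps $\psi_n$, defined on the open sets $U_n \subset \mathbb{F}_1^n$ and taking values in the Banach spaces $\mathcal{L}_{s,n}^{r,s}(\mathbb{F}_2, \mathbb{F}_3)$ introduced in Lemma~\ref{L_BanachSpaceLrssn}. The target Fr\'echet space $\mathcal{L}^{r,s}(\mathbb{F}_2, \mathbb{F}_3)$ is by Theorem~\ref{T_FrechetStructureOnProjectiveLimitOfLnrs} the projective limit of these Banach spaces along the canonical projections $\pi_i^j$, so Proposition~\ref{P_DiffentiabilityOfProjectiveLimitsOfDifferentialFunctions} applies once the $\psi_n$'s are produced and shown to be smooth and projectively compatible.

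For each $n\geq s$, I would define
\[
\psi_n : U_n \longrightarrow \mathcal{L}_{s,n}^{r,s}(\mathbb{F}_2,\mathbb{F}_3),\qquad
q_n \longmapsto \bigl(\varphi_s({\vphantom{A}}_1\delta_s^n(q_n)),\,\varphi_{s+1}({\vphantom{A}}_1\delta_{s+1}^n(q_n)),\,\dots,\,\varphi_n(q_n)\bigr).
\]
Since (PSLSM~1) guarantees by iteration that ${\vphantom{A}}_1\delta_i^n(U_n)\subset U_i$ for all $s\leq i\leq n$, each component is well defined. The key point is that $\psi_n$ actually takes values in the constrained subspace $\mathcal{L}_{s,n}^{r,s}(\mathbb{F}_2,\mathbb{F}_3)$: the commutativity of the diagram in (PSLSM~2) between consecutive indices reads, for any $q_{i+1}\in U_{i+1}$,
\[
{\vphantom{A}}_3\delta_{i-s}^{i-s+1}\circ \varphi_{i+1}(q_{i+1}) = \varphi_i({\vphantom{A}}_1\delta_i^{i+1}(q_{i+1}))\circ {\vphantom{A}}_2\delta_{i+r}^{i+r+1},
\]
and composing these relations yields the full compatibility ${\vphantom{A}}_3\delta_{i-s}^{j-s}\circ \varphi_j({\vphantom{A}}_1\delta_j^n(q_n)) = \varphi_i({\vphantom{A}}_1\delta_i^n(q_n))\circ {\vphantom{A}}_2\delta_{i+r}^{j+r}$ for all $s\leq i\leq j\leq n$, which is exactly the condition defining $\mathcal{L}_{s,n}^{r,s}(\mathbb{F}_2,\mathbb{F}_3)$. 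Projective compatibility $\pi_i^j\circ\psi_j = \psi_i\circ {\vphantom{A}}_1\delta_i^j$ then follows directly from ${\vphantom{A}}_1\delta_i^n = {\vphantom{A}}_1\delta_i^j\circ {\vphantom{A}}_1\delta_j^n$, and one checks that $\underleftarrow{\lim}\psi_n$ coincides with $\underleftarrow{\lim}\varphi_n$ under the identification of Theorem~\ref{T_FrechetStructureOnProjectiveLimitOfLnrs}.

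Smoothness of each $\psi_n$ reduces to smoothness of each component $q_n\mapsto \varphi_i({\vphantom{A}}_1\delta_i^n(q_n))$ as a map into the Banach space $\mathcal{L}(\mathbb{F}_2^{i+r},\mathbb{F}_3^{i-s}) = \mathcal{L}_i^{r,s}(\mathbb{F}_2,\mathbb{F}_3)$: this is a composition of the continuous linear (hence Leslie $C^\infty$ by (PDL~1)) bonding map ${\vphantom{A}}_1\delta_i^n$ and the smooth local shift morphism $\varphi_i$, so the chain rule (PDL~3) gives smoothness. Smoothness into the finite product $\prod_{i=s}^n \mathcal{L}_i^{r,s}(\mathbb{F}_2,\mathbb{F}_3)$ follows componentwise, and since $\mathcal{L}_{s,n}^{r,s}(\mathbb{F}_2,\mathbb{F}_3)$ is a closed subspace carrying the induced norm, smoothness descends to it. An application of Proposition~\ref{P_DiffentiabilityOfProjectiveLimitsOfDifferentialFunctions} to the sequence $(\psi_n)_{n\geq s}$ concludes.

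The step I expect to be the main obstacle is the subspace issue hidden in the second paragraph: one must be careful that (PSLSM~2) is strong enough to force the range of $\psi_n$ into $\mathcal{L}_{s,n}^{r,s}(\mathbb{F}_2,\mathbb{F}_3)$ \emph{pointwise on $U_n$} rather than only on threads in $U$, and that the inductive argument propagating the compatibility from consecutive pairs $(i,i+1)$ to arbitrary pairs $(i,j)$ is carried out correctly using that each $\varphi_i$ is linear in its second argument. Once this verification is settled, everything else is a routine application of the chain rule and the cited Proposition.
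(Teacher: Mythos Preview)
Your proposal is correct and follows essentially the same approach as the paper: invoke Theorem~\ref{T_FrechetStructureOnProjectiveLimitOfLnrs} to realise $\mathcal{L}^{r,s}(\mathbb{F}_2,\mathbb{F}_3)$ as a projective limit of the Banach spaces $\mathcal{L}_{s,n}^{r,s}(\mathbb{F}_2,\mathbb{F}_3)$, then apply Proposition~\ref{P_DiffentiabilityOfProjectiveLimitsOfDifferentialFunctions}. The paper's proof is a two-line sketch of exactly this, whereas you have spelled out the auxiliary maps $\psi_n$, the verification that they land in the constrained subspace, and the projective compatibility --- including the legitimate caveat that (PSLSM~2), as literally stated for threads $q\in U$, must be read as a pointwise condition on $U_{n+1}$ for the argument to go through; the paper does not comment on this subtlety.
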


\begin{proof}
Since $\mathcal{L}^{r,s}\left(  \mathbb{F}_{2},\mathbb{F}_{3}\right)  $ is the
projective limit of the Banach spaces $\mathcal{L}_{n}^{r,s}\left(
\mathbb{F}_{2},\mathbb{F}_{3}\right)  $ (cf. Theorem
\ref{T_FrechetStructureOnProjectiveLimitOfLnrs}) the smoothness of
$\underleftarrow{\lim}\varphi_{n}$ results from the smoothness of the maps
$\varphi_{n}$ and the Proposition
\ref{P_DiffentiabilityOfProjectiveLimitsOfDifferentialFunctions}.
\end{proof}

\section{Hilbert towers \label{_HilbertTowers}}

In this section, the reader is referred to \cite{KapMak}. \newline We consider
the particular case where the Fr\'{e}chet spaces $\mathbb{F}_{1}$,
$\mathbb{F}_{2}$ and $\mathbb{F}_{3}$ are all equal to a same projective limit
of Hilbert spaces.

\subsection{Definition. Example}

\begin{definition}
\label{D_HilbertTower} The sequence $\left(  H_{n}\right)  _{n\in\mathbb{N}}$
is a Hilbert tower if

\begin{description}
\item[\textbf{(HT 1)}] $\left(  H_{n}\right)  _{n\in\mathbb{N}}$ is a
decreasing sequence of Hilbert spaces: $H_{0}\supset H_{1}\supset\cdots$;

\item[\textbf{(HT 2)}] $\forall n\in\mathbb{N},\ \overline{H_{n+1}}=H_{n}$;

\item[\textbf{(HT 3)}] There exists a basis of $H_{\infty}=\bigcap
\limits_{n\in\mathbb{N}}H_{n}$, i.e. an orthonormal basis $\left(
e_{m}\right)  _{m\in\mathbb{N}}$ of $H_{0}$, where $e_{m}\in H_{\infty}$, such
that $\left(  e_{m}\right)  _{m\in\mathbb{N}}$ is a basis of any $H_{N}$ (with
$N\in\mathbb{N}$).
\end{description}
\end{definition}

A Hilbert tower can be seen as an IHL space as defined in \cite{Omo}.

\begin{example}
\label{Ex_HilbertTowerSobolevSpaces} The sequence of Sobolev spaces $\left(
H^{n}\left(  \mathbb{S}^{1}\right)  \right)  _{n\in\mathbb{N}}$ where
\[
H^{n}\left(  \mathbb{S}^{1}\right)  =\left\{  q\in L^{2}\left(  \mathbb{S}%
^{1}\right)  :\forall k\in\left\{  0,\dots,n\right\}  ,q^{\left(  k\right)
}\in L^{2}\left(  \mathbb{S}^{1}\right)  \right\}
\]
is a Hilbert tower where the orthonormal basis is $\left(  e_{0},e_{1}%
,e_{-1},\dots,e_{k},e_{-k},\dots\right)  $, ($k\in\mathbb{N}$) where
$e_{k}:x\mapsto e^{i2k\pi x}$.
\end{example}

Let $\left(  H_{n}\right)  _{n\in\mathbb{N}}$ be a Hilbert tower where
$\iota_{n}^{n+1}:H_{n+1}\longrightarrow H_{n}$ is the natural injection and
let us denote $\left\langle .,.\right\rangle _{n}$ the inner product of
$H_{n}$ and $\left\Vert \ \right\Vert _{H_{n}}$ the associated norm.

The projective limit $H_{\infty}$ of the Hilbert tower $\left(  H_{n}\right)
_{n\in\mathbb{N}}$ is perfectly defined and can be endowed with a structure of
Fr\'{e}chet space.

\subsection{Local shift Hilbert morphisms\label{__LocalShiftHilbertMorphisms}}

In the sequel, we reformulate some of the precedent results in the particular
case of a Hilbert tower $\left(  H_{n}\right)  _{n\in\mathbb{N}}$, that is for
all $n\in\mathbb{N},\mathbb{F}_{1}^{n}=\mathbb{F}_{2}^{n}=\mathbb{F}_{2}%
^{n}=H_{n}$, where the norm $\left\Vert \ \right\Vert _{1}^{n}=\left\Vert
\ \right\Vert _{2}^{n}=\left\Vert \ \right\Vert _{3}^{n}=\sqrt{\left\langle
.,.\right\rangle _{n}}$ are associated to the inner product of $H_{n}$.

\begin{definition}
\label{D_LocalShiftMorphism}A local shift Hilbert morphism of base $n$ and
type $\left(  r,s\right)  $ is a smooth map%
\[
\varphi_{n}:U_{n}\longrightarrow\mathcal{L}\left(  H_{n+r},H_{n-s}\right)
\]
where $U_{n}$ is an open set of $H_{n}$.
\end{definition}

\begin{example}
\label{Ex_FirstPoissonStructureForKdVEquation} On the Sobolev tower $\left(
H_{n}=H^{n}\left(  \mathbb{S}^{1}\right)  \right)  _{n\in\mathbb{N}}$ (cf.
Example \ref{Ex_HilbertTowerSobolevSpaces}), we consider the operator
\[%
\begin{array}
[c]{cccc}%
\partial_{x}: & U\cap H_{n} & \longrightarrow & \mathcal{L}\left(
H_{n+1},H_{n}\right)  \\
& q & \longmapsto & \left(  \partial_{x}\right)  _{q}%
\end{array}
\]
which corresponds to the first Poisson structure for the KdV equation (cf.
section \ref{_ExampleKdVEquationS1}.) where $U=H_{0}=H^{0}\left(
\mathbb{S}^{1}\right)  $ and
\[%
\begin{array}
[c]{cccc}%
\left(  \partial_{x}\right)  _{q}: & H_{n+1} & \longrightarrow & H_{n}\\
& u & \longmapsto & \partial_{x}u
\end{array}
.
\]

So $\partial_{x}$ is a local shift Hilbert morphism of type $\left(
1,0\right)  $ above any $H_{n}=H^{n}\left(  \mathbb{S}^{1}\right)  $.
\end{example}

\begin{example}
\label{Ex_SecondPoissonStructureForKdVEquation}On the Sobolev tower $\left(
H^{n}\left(  \mathbb{S}^{1}\right)  \right)  _{n\in\mathbb{N}}$, the operator
\[%
\begin{array}
[c]{cccc}%
L_{n}: & U\cap H_{n} & \longrightarrow & \mathcal{L}\left(  H_{n+2}%
,H_{n-1}\right)  \\
& q & \longmapsto & \left(  L_{n}\right)  _{q}%
\end{array}
\]
corresponds to the second Poisson structure for the KdV equation where
$U=H_{0}=H^{0}\left(  \mathbb{S}^{1}\right)  $ and%
\[%
\begin{array}
[c]{cccc}%
\left(  L_{n}\right)  _{q}: & H_{n+2} & \longrightarrow & H_{n-1}\\
& u & \longmapsto & -\dfrac{1}{2}\partial_{x}^{3}u+q.\partial_{x}%
u+\partial_{x}q.u
\end{array}
.
\]

$L_{n}$ is then a local shift morphism of type $\left(  2,1\right)  $ above
$H_{n}=H^{n}\left(  \mathbb{S}^{1}\right)  $.

In particular, we have, for $q\in H^{n}\left(  \mathbb{S}^{1}\right)  $,
\[
\left(  L_{n}\right)  _{q}\in\mathcal{L}\left(  H^{n+2}\left(  \mathbb{S}%
^{1}\right)  ,H^{n-1}\left(  \mathbb{S}^{1}\right)  \right)
\]
because
\[
\forall u\in H^{n+2}\left(  \mathbb{S}^{1}\right)  ,\ \left\Vert \left(
L_{n}\right)  _{q}\left(  u\right)  \right\Vert _{n-1}\leq c_{n}\left\Vert
u\right\Vert _{n+2}%
\]
where the norm $\left\Vert \ \right\Vert _{n}$ is given by%
\[
\left\Vert v\right\Vert _{n}=\sqrt{\sum\limits_{k=0}^{n}\int
\nolimits_{\mathbb{S}^{1}}\left[  \left(  \partial_{x}^{k}v\right)  \left(
x\right)  \right]  ^{2}dx}\text{.}%
\]

\end{example}

\subsection{Projective limits of local shift Hilbert morphisms\label{__ProjectiveLimitLocalShiftHilbertMorphisms}}

\begin{definition}
\label{D_ProjectiveSequenceOfLocalShiftHilbertMorphisms} Let $\left(
H_{n}\right)  _{n\in\mathbb{N}}$ be a Hilbert tower. A sequence $\left(
\varphi_{n}\right)  _{n\in\mathbb{N},\ n\geq s}$ of local shift morphisms
$\varphi_{n}$ of type $\left(  r,s\right)  \in\mathbb{N}\times\mathbb{N}$
above $H_{n}$ is said to be a projective sequence of local shift Hilbert
morphisms if, for any $q=(q_{n})\in\prod\limits_{n\in N}H_{n}$, we have the
following commutative diagram:
\[
\xymatrix {
	&& H_n \times H_{n+r} \ar[dll]_{(\operatorname{Id}_{H_n},\,\varphi_n(q_n))\;\;\;} \ar[ddl] &&&
	H_{n+1} \times H_{n+r+1} \ar[lll]_{\iota_n^{n+1} \times \iota_{n+r}^{n+r+1}} \ar[dll]_{(\operatorname{Id}_{H_{n+1}},\,\varphi_n(q_{n+1}))\;\;\;} \ar[ddl]^{\pi_{n+1}^{n+1+r}} \\
	H_n  \times H_{n-s}\ar[dr]_{\pi_n^{n-s}} &&& H_{n+1} \times H_{n-s+1} \ar[lll]_{\;\;\iota_n^{n+1} \times \iota_{n-s}^{n-s+1}} \ar[dr] && \\
	& H_n &&& H_{n+1} \ar[lll]_{\iota_n^{n+1}} &\\
}
\]

\end{definition}

Let $\left(  H_{n}\right)  _{n\in\mathbb{N}}$ be a Hilbert tower and consider
$H_{\infty}=\bigcap\limits_{n\in\mathbb{N}}H_{n}=\underleftarrow{\lim}H_{n}$.
For $n\geq s$, the space%
\[%
\begin{array}
[c]{cc}%
\mathcal{H}_{s,n}^{r,s}\left(  H_{\infty}\right)  = & \left\{
\begin{array}
[c]{c}%
\left(  L_{s},\dots,L_{n}\right)  \in\prod\limits_{i=s}^{n}\mathcal{L}\left(
H_{i+r},H_{i-s}\right)  :\\
\forall\left(  i,j\right)  \in\mathbb{N}^{2}:n\geq j\geq i\geq s,\iota
_{i-s}^{j-s}\circ L_{j}=L_{i}\circ{\iota}_{i+r}^{j+r}%
\end{array}
\right\}
\end{array}
\]
is a Banach space. We then get a projective sequence $\left\{  \left(
\mathcal{H}_{s,i}^{r,s}\left(  H_{\infty}\right)  ,\pi_{i}^{j}\right)
\right\}  _{\left(  i,j\right)  \in\mathbb{N}^{2},\ j\geq i\geq s}$ where
\[
\pi_{i}^{j}:\left(  L_{s},\dots,L_{j}\right)  \mapsto\left(  L_{s},\dots
,L_{i}\right)  .
\]
Its projective limit $\mathcal{H}^{r,s}\left(  H_{\infty}\right)  $ can be
endowed with a structure of Fr\'{e}chet space

For a projective sequence of local shift Hilbert morphisms $\left(
\varphi_{n}\right)  _{n\in\mathbb{N},\ n\geq s}$ of type $\left(  r,s\right)
$, we have the following commutative diagram:%
\[%
\begin{array}
[c]{ccc}%
\mathcal{H}_{s,i}^{r,s}\left(  H_{\infty}\right)  & \overset{\pi_{i}^{j}%
}{\longleftarrow} & \mathcal{H}_{s,j}^{r,s}\left(  H_{\infty}\right) \\
\left(  \varphi_{s},\dots,\varphi_{i}\right)  \uparrow
\ \ \ \ \ \ \ \ \ \ \ \ \ \ \ \ \  &  &
\ \ \ \ \ \ \ \ \ \ \ \ \ \ \ \ \uparrow\left(  \varphi_{s},\cdots,\varphi
_{j}\right) \\
U\cap H_{s}\times\cdots\times U\cap H_{i} & \overset{p_{i}^{j}}{\longleftarrow
} & U\cap H_{s}\times\cdots\times U\cap H_{j}%
\end{array}
\]
where the maps $\left(  \varphi_{s},\dots,\varphi_{n}\right)  :U\cap
H_{s}\times\cdots\times U\cap H_{n}\longrightarrow\mathcal{H}_{s,n}%
^{r,-s}\left(  H_{\infty}\right)  $ are smooth.

We can define the projective limit
\[
\varphi=\underleftarrow{\lim}\left(  \varphi_{s},\dots,\varphi_{n}\right)
:U\cap H_{\infty}\longrightarrow\mathcal{H}^{r,s}\left(  H_{\infty}\right)
\]
and this limit is smooth.

\begin{example}
The sequence $\left(  L_{n}\right)  _{n\in\mathbb{N}}$ of Example
\ref{Ex_SecondPoissonStructureForKdVEquation} is a projective sequence of
local shift morphisms of type $\left(  2,1\right)  $.
\end{example}

\section{Shift Hilbert Poisson tensors \label{_ShiftHilbertPoissonTensors}}

The notion of Poisson tensor is relevant in Mechanics and Mathematical
Physics. It corresponds to a tensor field $P$ twice contravariant whose
Schouten bracket $\left[  P,P\right]  $ vanishes. Bihamiltonian structures
corresponding to a pair of compatible Poisson tensors is a fundamental tool in
the resolution of some dynamical systems because the recursion operator
linking both structures gives rise to conservation laws.

In the framework of Hilbert towers, thanks to the identification of a Hilbert
space with its dual (Riesz Theorem), the morphism $P$ from the cotangent
bundle to the tangent bundle can be seen as a projective limit of local shift
Hilbert morphisms. Such objects are adapted to the description of the KdV
equation on the circle $\mathbb{S}^{1}$.

\begin{definition}
\label{D_ShiftHilbertPoissonTensor}Let $\left(  P_{n}\right)  _{n\in
\mathbb{N}}$ be a sequence of local shift morphisms of type $\left(
r,s\right)  $ on the Hilbert tower $\left(  H_{n}\right)  _{n\in\mathbb{N}}$
whose projective limit is $P=\underleftarrow{\lim}P_{n}$. \newline$P$ is said
to be a shift Hilbert Poisson tensor of type $\left(  r,s\right)  $ on
$H_{\infty}=\underleftarrow{\lim}H_{n}$ if, for any $q=\underleftarrow{\lim
}q_{n}$, $f=\underleftarrow{\lim}f_{n}$, $g=\underleftarrow{\lim}g_{n}$ and
$h=\underleftarrow{\lim}h_{n}$, it fulfils the following conditions:

\begin{description}
\item[\textbf{(SHPT 1)}] $P$ is antisymmetric,\newline i.e. for all
$n\in\mathbb{N}$ such that $n-s\geq0$,
\[
\left\langle \left(  P_{n}\right)  _{q_{n}}\left(  f_{n+r}\right)
,g_{n-s}\right\rangle _{H_{n-s}}=-\left\langle \left(  P_{n}\right)  _{q_{n}%
}\left(  g_{n+r}\right)  ,f_{n-s}\right\rangle _{H_{n-s}}%
\]

\item[\textbf{(SHPT 2)}] The Schouten bracket vanishes: $\left[  P,P\right]
=0$,\newline where for all $n\in\mathbb{N}$ such that $n+r-2s\geq0$,
\[
\ \left[  P_{n},P_{n}\right]  _{q_{n}}\left(  f_{n+r},g_{n+r},h_{n+r}\right)
=\mathbf{\sigma}\left\langle f_{n+r-2s},P_{q_{n-s}}^{\prime}\left(
g_{n+r-s},\left(  P_{n}\right)  _{q_{n}}h_{n+r}\right)  \right\rangle \
\]

\end{description}
\end{definition}

In this definition, the differentiabity of $P$ at $q$ is given by:%
\[
P_{q}^{\prime}\left(  f,g\right)  =\dfrac{d}{dt}P_{q+tg}f_{\ |t=0}%
\]

\begin{example}
\label{Ex_KdVEquationS1} The Korteweg-de Vries (KdV) equation (\cite{KorVri})
is an evolution equation in one space dimension which was proposed as a model
to describe waves on shallow water surfaces. This nonlinear and dispersive PDE
was first introduced by J. Boussinesq (\cite{Bous}) and rediscovered by D.
Korteweg and G. de Vries (\cite{KorVri}) in order to modelize natural
phenomena discovered by Russel (\cite{Rus}).\newline\newline In \cite{Arn}, V.
Arnold suggested a general framework for the Euler equations on an arbitrary
group that describe a geodesic flow with respect to a suitable one-sided
invariant Riemannian metric on the group. This approach works for the Virasoro
group and provides a natural geometric setting for the KdV equation (cf.
\cite{KheMis}).

It is well known (e.g. \cite{FMPZ}, \cite{MagMor}, \cite{Olv}, \cite{Sch}, \cite{ZubMag}, ...) that this equation can be written in Hamiltonian form in two distinct
ways. Moreover, there exists an infinite hierarchy of commuting conservation
laws and Hamiltonian flows generated by a recursion operator linking both
Poisson brackets. Such an equation can be viewed as a complete integrable
system and has a lot of remarkable properties, including soliton solutions. 
In \cite{KisLeu}, the framework of variational Lie algebroids is used to describe such an evolutionary equation.

Here we consider the KdV equation on the circle $\mathbb{S}^{1}$ of unit
length
\[
\partial_{t}u=-\partial_{x}^{3}u+6u\partial_{x}u
\]
where $t\in\mathbb{R}$ and $x\in\mathbb{S}^{1}.$\newline

This equation can be seen as an infinite dimensional system on the Hilbert
tower $\left(  H^{n}\left(  \mathbb{S}^{1}\right)  \right)  _{n\in\mathbb{N}}$
(cf. \cite{KapMak} and \cite{KapPos}). This system can be written in a
bihamiltonian way relatively to the compatible shift Hilbert Poisson tensors
$\partial_{x}$, of type $\left(  1,0\right)  $, and $L_{q}$ of type $\left(
2,1\right)  $.
\end{example}

\noindent
Patrick Cabau\\
Lyc\'{e}e Pierre de Fermat\\
Parvis des Jacobins\\
BP 7013\\
31068 Toulouse Cedex 7\\
France\\
\noindent
e-mail: patrickcabau@gmail.com 


\begin{thebibliography}{999999}                                                                                           

\bibitem[AbbMan]{AbbMan}M. Abbati M, A. Mani\`{a}, \textit{On Differential
Structure for Projective Limits of Manifolds}, J. Geom. Phys. \textbf{29} 1-2
(1999) 35--63.

\bibitem[ADGS]{ADGS}M. Aghasi, C.T. Dodson, G.N. Galanis, A. Suri,
\textit{Conjugate connections and differential equations on infinite
dimensional manifolds,} J. Geom. Phys. (2008).

\bibitem[AghSur]{AghSur}M. Aghasi, A. Suri, \textit{Splitting theorems for the
double tangent bundles of Fr\'{e}chet manifolds}, Balkan Journal of Geometry
and Its Applications \textbf{15} 2 (2010) 1--13.

\bibitem[Arn]{Arn}V. Arnold, \textit{Sur la g\'{e}om\'{e}trie
diff\'{e}rentielle des groupes de Lie de dimension infinie et ses applications
\`{a} l'hydrodynamique des fluides parfaits}, Ann. Inst. Fourier (Grenoble)
\textbf{16} (1966) 319--361

\bibitem[Bour]{Bour}N.~Bourbaki, \textit{Topologie g\'{e}n\'{e}rale},
Chapitres 1 \`{a} 4, Hermann, Paris, 1971.

\bibitem[Bous]{Bous}J. Boussinesq, \textit{Essai sur la th\'{e}orie des eaux
courantes}, M\'{e}moires pr\'{e}sent\'{e}s par divers savants \`{a}
l'Acad\'{e}mie des Sciences de l'Institut de France, XXIII, (1877) 1--680

\bibitem[Cab]{Cab}P. Cabau, \textit{Strong projective limits of Banach Lie
algebroids}, Portugaliae Mathematica, Volume 69, Issue 1 (2012).

\bibitem[DoGaVa]{DoGaVa}C.T.J. Dodson, G.N. Galanis, E. Vassiliou,
\textit{Geometry in a Fr\'{e}chet Context: A Projective Limit Approach},
London Mathematical Society, Lecture Note Series 428. Cambridge University
Press, 2015.

\bibitem[FMPZ]{FMPZ}G. Falqui, F. Magri, M. Pedroni, P. Zubelli, \textit{A
Bi-Hamiltonian Theory for Stationary KdV Flows and their Separability}, SISSA
137/99/FM, 1999.

\bibitem[FroKri]{FroKri}A. Fr\"{o}licher, A. Kriegl, \textit{Linear Spaces and
Differentiation Theory}, Pure and Applied Mathematics, J. Wiley, Chichester 1988.

\bibitem[Gal1]{Gal1}G.N. Galanis, \textit{Projective Limits of Banach-Lie
groups}, Periodica Mathematica Hungarica \textbf{32} (1996) 179--191.

\bibitem[Gal2]{Gal2}G.N. Galanis, \textit{Projective Limits of Banach Vector
Bundles}, Portugaliae Mathematica \textbf{55} 1 (1998) 11--24.

\bibitem[Ham]{Ham}R.S. Hamilton, \textit{The inverse function theorem of Nash
and Moser}, Bulletin of the American Mathematical Society Volume 7, Number 1, 1982

\bibitem[KapMak]{KapMak}T. Kappeler, M. Makarov, \textit{On the symplectic
foliation induced by the second Poisson bracket for KdV}, In Symmetry and
perturbation theory, Quad. Cons. Naz. Ricerche Gruppo Naz.Fis. Mat. 54, Roma
1998, 135--152.

\bibitem[KapPos]{KapPos}T. Kappeler, J. P\"{o}schel, \textit{On the
Korteweg-de Vries Equation and KAM theory}, In: Hildebrandt S., Karcher H.
(eds) Geometric Analysis and Nonlinear Partial Differential Equations.
Springer, Berlin, Heidelberg (2003).

\bibitem[KheMis]{KheMis}B. Khesin, G. Misiolek, \textit{Euler equations on
homogeneous spaces and Virasoro orbits}, Adv. Math. (2003) 116--144.

\bibitem[KisLeu]{KisLeu}A. V. Kiselev, J. W. van de Leur, \textit{Variational Lie algebroids and homological evolutionary vector fields}, Theor Math Phys (2011) 167--772. 

\bibitem[KorVri]{KorVri}D. J. Korteweg, G. de Vries, \textit{On the Change of
Form of Long Waves Advancing in a Rectangular Canal, and on a New Type of Long
Stationary Waves}, Philosophical Magazine, vol. 39 (1895) 422--443.

\bibitem[KriMic]{KriMic}A. Kriegel, P.W. Michor, \textit{The convenient
Setting of Global Analysis }(AMS Mathematical Surveys and Monographs)
\textbf{53} 1997.

\bibitem[Les]{Les}J.A. Leslie, \textit{On a differential structure for the
group of diffeomorphisms}, Topology \textbf{46} (1967) 263--271.

\bibitem[MagMor]{MagMor}F. Magri, C. Morosi, \textit{\ A geometrical
characterization of integrable hamintonian systems through the theory of
Poisson-Nijenhuis manifolds, }Quaderno S 19, Universit\`{a} degli studi di
Milano, 1984.

\bibitem[Olv]{Olv}P.J. Olver, \textit{Application of Lie Groups to
Differential Equations}, Graduate Texts in Mathematics, 107, Springer, 2000.

\bibitem[Omo]{Omo}H. Omori, \textit{Infinite-dimensional Lie groups},
Translations of Mathematical Monographs vol 158 (American Mathematical
Society) 1997.

\bibitem[Pel]{Pel}F. Pelletier, \textit{Partial Poisson Convenient Manifolds},
in Geometric Methods in Physics XXXV WGMP 2016, Trends in Mathematics,
Birkh\"{a}user, Editors: Kielanowski, Piotr, Odzijewicz, Anatol, Previato,
Emma (Eds.).

\bibitem[Tre]{Tre}F. Tr\`{e}ves, \textit{Topological Vector Spaces,
Distributions and Kernels}, Academic Press, 1967.

\bibitem[RobRob]{RobRob}A. P. Robertson, W. Robertson, \textit{Topological
Vector Spaces}, Cambridge Tracts in Mathematics. 53. Cambridge University
Press 1964.

\bibitem[Rus]{Rus}J. S. Russell, \textit{Report on waves}, In: Report of the
Fourteenth Meeting of the British Association for the Advancement of Sciences.
John Murray, London, 1844, 311--390

\bibitem[Sch]{Sch}R. Schmid, \textit{Infinite Dimensional Lie Groups and
Application to Mathemathical Physics, }Journal of Geometry and Symmetry in
Physics \textbf{1}, (2004) 1--67.

\bibitem[Vog]{Vog}D. Vogt, \textit{Operators between Fr\'{e}chet spaces},
Mathematical Proceedings of The Cambridge Philosophical Society, 1987.

\bibitem[ZubMag]{ZubMag}J. P. Zubelli, F.\ Magri, \textit{Differential
Equations in the Spectral Parameter, Darboux Transformations and a Hierarchy
of Master Symmetries for KdV}, Commun. Math. Phys. 141 (1991) 329--351

\end{thebibliography}
\end{document}